\newcommand{\seqnum}[1]{\href{http://oeis.org/#1}{{#1}}}
\def\mytopsep{3mm}
\newtheoremstyle{myplain}{\mytopsep}{\mytopsep}{\itshape}{0pt}{\bfseries}{.}{3mm}{}
\newtheoremstyle{mydefinition}{\mytopsep}{\mytopsep}{\normalfont}{0pt}{\bfseries}{.}{3mm}{}
\newtheoremstyle{myremark}{\mytopsep}{\mytopsep}{\normalfont}{0pt}{\bfseries}{.}{3mm}{}
\theoremstyle{myplain}
\newtheorem{thm}{Theorem}[]
\newtheorem{lem}[thm]{Lemma}
\newtheorem{prop}[thm]{Proposition}
\numberwithin{equation}{section}
\newcommand{\inv}[1]{\left( 1#1\right)^{-1}}
\newcommand{\binv}[1]{\biggl(1#1\biggr)^{-1}}
\newcommand{\mathsum}{\mathop{\sum}\limits}
\newcommand{\N}{\mathbb N}
\newcommand{\floor}[1]{\left\lfloor#1\right\rfloor}
\newcommand{\w}{\omega}
\def\compspace{3mu} 
{\catcode`\,=13 \gdef,{\mskip \compspace}} 
\def\comp{\bgroup\catcode`\,=13\relax\compx}
\def\compx(#1){#1\egroup}
\begin{document}
\title{Compositions and Fibonacci Identities}
\author{Ira M. Gessel}
\address{Department of Mathematics, Brandeis University, MS 050, Waltham, MA 02453}
\email{gessel@brandeis.edu}
\author{Ji Li}
\email{vieplivee@gmail.com}
\thanks{This work was partially supported by a grant from the Simons Foundation (\#229238 to Ira Gessel).}
\date{\today}
\begin{abstract}
We study formulas expressing Fibonacci numbers as sums over compositions. For example, 
\begin{equation*}
F_{2n}=\sum a_1 a_2\cdots a_k
\end{equation*}
and for $n\ge 2$,
\begin{equation*}
F_{n-2}=\sum \floor{\frac{a_1-1}{2}}\cdots \floor{\frac{a_k-1}{2}}.
\end{equation*}
where the sums are over all compositions $\comp(a_1, a_2,\cdots, a_k)$ of $n$, for any $k$. We give a systematic account of such formulas using free monoids. The number of compositions of $n$ with parts 1 and 2 is the Fibonacci number $F_{n+1}$, and these compositions form a free monoid. Our formulas all come from free submonoids of this free monoid.
\end{abstract}
\maketitle
\smallskip

\section{Introduction}
\label{s-intro}
A composition of an integer $n$ is a sequence $\comp(a_1, a_2,\cdots, a_k)$ of positive integers, called the parts of the composition, with sum $n$.
Richard Stanley's \emph{Enumerative Combinatorics, Vol.~1\/} \cite[Chapter~1, Exercise 35, pp.~109 and 152--153]{ec1}, contains several formulas expressing Fibonacci numbers in terms of sums over compositions:
\begin{enumerate}[($i$)]
\item $F_{n+1}$ is the number of compositions of $n$ into parts equal to 1 or 2.\\[-10pt]
\item $F_{n-1}$ is the number of compositions of $n$ into  parts greater than 1.
\item $F_n$ is the number compositions of $n$ into odd parts.\\[-10pt]
\item $F_{2n} = \sum a_1 a_2\cdots a_k$.\\[-10pt]
\item $F_{2n-2} = \sum (2^{a_1 -1} -1)\cdots(2^{a_k -1}-1)$\\[-10pt]
\item $F_{2n+1} = \sum 2^{\#\{ i \,:\, a_i = 1\}}$
\end{enumerate}
Here the Fibonacci numbers are defined by $F_0=0$, $F_1=1$, and $F_n=F_{n-1}+F_{n-2}$ for $n\ge 2$, and the sums in $(iv)$--$(vi)$ are over all compositions $\comp(a_1,a_2,\cdots, a_k)$ of $n$. We note that these formulas are generally not true for $n=0$.

Our goal in this paper is to study identities of this form systematically, and to explain how to find such identities, how to prove them with generating functions, and how to prove them combinatorially.

It is easy to express sums over compositions in terms of generating functions. Let $C(n)$ be the set of compositions of $n$. Then we will write $\sum_{a\in C(n)}$ for a sum over all compositions $\comp(a_1,\cdots, a_k)$ of $n$, with any number of parts. The following result is well known and easily proved. (See, for example, Moser and Whitney \cite{mw} and  Hoggatt and Lind \cite{hl2}.)

\begin{lem}
\label{l-gf}
Let $u_1, u_2, u_3,\dots$ be any sequence of numbers. Then the sum
\[\sum_{a\in C(n)} u_{a_1}u_{a_2}\cdots u_{a_k}\]
is the coefficient of $x^n$ in
\[ \biggl(1-\sum_{i=1}^\infty u_ix^i\biggr)^{-1}.\]\end{lem}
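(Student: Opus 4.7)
The plan is to recognize this as a standard application of the geometric series for formal power series, applied to the generating function $U(x)=\sum_{i=1}^\infty u_i x^i$. Since $U(x)$ has zero constant term, $(1-U(x))^{-1}$ is a well-defined formal power series, equal to $\sum_{k=0}^\infty U(x)^k$.

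The main step is to unpack $U(x)^k$ by expanding the product. Multiplying out, I would write
\[
U(x)^k = \sum_{a_1\ge 1}\cdots\sum_{a_k\ge 1} u_{a_1}\cdots u_{a_k}\, x^{a_1+\cdots+a_k},
\]
so the coefficient of $x^n$ in $U(x)^k$ equals $\sum u_{a_1}\cdots u_{a_k}$ taken over all compositions $\comp(a_1,\ldots,a_k)$ of $n$ with exactly $k$ parts. Summing over $k\ge 0$ collects all compositions of $n$ with any number of parts, which matches the claimed sum on the left-hand side (with the convention that the empty composition of $0$ contributes an empty product equal to $1$).

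There is no real obstacle here; the only thing to be careful about is justifying that $(1-U(x))^{-1}=\sum_k U(x)^k$ as formal power series, which follows because $U(x)$ has no constant term, so for each $n$ only finitely many $U(x)^k$ contribute to the coefficient of $x^n$. In a two- or three-line write-up I would state the geometric series expansion, expand $U(x)^k$ as above, and identify the coefficient of $x^n$ with the desired sum over compositions.
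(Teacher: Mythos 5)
Your proof is correct. The paper states this lemma as ``well known and easily proved'' and gives no proof (citing Moser--Whitney and Hoggatt--Lind), and your argument---expanding $\bigl(1-U(x)\bigr)^{-1}$ as $\sum_{k\ge 0}U(x)^k$, which is legitimate since $U(x)$ has zero constant term, and reading off the coefficient of $x^n$ in $U(x)^k$ as the sum over compositions with exactly $k$ parts---is precisely the standard argument the paper has in mind.
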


Then formulas $(i)$--$(vi)$ follow from Lemma \ref{l-gf}, the easily verifiable identities
\begin{Lalign}
\frac 1{1-x-x^2} &= \inv {-x-x^2}\tag{$i'$}\\
1+\frac{x^2}{1-x-x^2} &= \inv{-x^2-x^3-x^4-\cdots}\tag{$ii'$}\\
 1+\frac x{1-x-x^2}&= \inv{-x-x^3-x^5-\cdots}\tag{$iii'$}\\
 1+\frac x{1-3x+x^2} &= \inv{-x-2x^2-3x^3-\cdots}\tag{$iv'$}\\
 1+\frac{x^2}{1-3x+x^2}&= \inv{-x^2-3x^3-7x^4-15x^5-\cdots}\tag{$v'$}\\
 \frac{1-x}{1-3x+x^2}&=\inv{-2x-x^2-x^3-x^4-x^5-\cdots},\tag{$vi'$}
 \end{Lalign}
and the formulas
\begin{subequations}
\begin{align}
\label{e-Fn}
\sum_{n=0}^\infty F_n x^n&= \frac{x}{1-x-x^2}\\\
\label{e-F2n}
\sum_{n=0}^\infty F_{2n}x^n &=\frac{x}{1-3x+x^2}\\
\label{e-F2n+1}
\sum_{n=0}^\infty F_{2n+1} x^n& = \frac{1-x}{1-3x+x^2}.
\end{align}
\end{subequations}
Formula \eqref{e-Fn} follows easily from the Fibonacci recurrence $F_n = F_{n-1}+F_{n-1}$. For \eqref{e-F2n} and \eqref{e-F2n+1}, we have
\begin{align}
\sum_{n=0}^\infty F_n x^n &= \frac{x}{1-x-x^2}\cdot \frac{1+x-x^2}{1+x-x^2}\notag\\
  &= \frac{x+x^2-x^3}{1-3x^2+x^4} = x\,\frac{1-x^2}{1-3x^2+x^4}
  +\frac{x^2}{1-3x^2+x^4}.\label{e-evenodd}
\end{align}
Then  \eqref{e-F2n} and \eqref{e-F2n+1} follow by extracting the even 
odd powers of $x$ from \eqref{e-evenodd}.

The proofs we have just sketched (essentially the generating function proofs given by Stanley), though straightforward, do not really explain why these formulas are true, nor how one might find them or other similar formulas. To do this, we study a combinatorial structure that lies behind them.

\section{Free Monoids}

Let $A$ be a set, which we call an \emph{alphabet}. Let $A^{\ast}$ be the set of words (finite sequences) of elements of $A$. Then with the operation of concatenation, $A$ is a monoid (a semigroup with unit), where the unit is the empty word. We call $A^{\ast}$ the \emph{free monoid on $A$}. The \emph{length} $l(x)$ of an element $x=a_1a_2\cdots a_k$, where each $a_i$ is in $A$, is $k$.  

More generally, a free monoid $M$ is a monoid isomorphic to a free monoid of the form $A^{\ast}$. So if $M$ is a free monoid, then there exists a subset $P$ of $M$ such that every element of $M$ has a unique factorization as a product of elements of $P$. We call $P$ the set of \emph{primes} of $M$. (It is easy to see that $P$ is unique.) 

A \emph{weight function} on a free monoid $M$ is a function $\w: M \to\N$, where $\N$ is the set of nonnegative integers, with the properties that $\w(m_1 m_2)=\w(m_1)+\w(m_2)$ for all $m_1, m_2\in M$ and $\w(m)=0$ if and only if $m$ is the unit element of $M$. It is easy to see that a weight function on $M$ is determined by its values on the primes of $M$. 

If $L$ is any submonoid of a free monoid, we call an element $p$ of $L$ \emph{irreducible} (in $L$) if $p$ is not the unit element of $M$ and $p$ cannot be expressed as a product of two non-unit elements of $L$. (Note that the irreducibility of $p$ depends on both $p$ and $L$.) It is clear that every element of $L$ can be factored as a product of irreducibles, but in general this factorization is not unique. If $L$ is a free monoid, then the factorization is always unique and the irreducible elements are the primes of $L$.  

Let $M$ be a free monoid with a weight function $\w$. If $x$ is an indeterminate then the map $m\mapsto x^{\w(m)}$ is a homomorphism from $M$ to the monoid of powers of $x$ under multiplication, and  unique factorization in $M$ gives  the well-known identity for formal power series
\begin{equation}
\label{e-freemonoid}
\sum_{m \in M} x^{\w(m)} = \biggl(1-\sum_{p \in P} x^{\w(p)}\biggr)^{-1},\end{equation}
where $P$ is the set of primes of $M$.
Equivalently, the number of words in $M$ of weight $n$ is the sum 
\[\sum_{a\in C(n)} u_{a_1}u_{a_2}\cdots u_{a_k},\] where $u_i$ is the number of primes of $M$ of weight $i$. We will see that formulas $(i)$--$(vi)$ can all be interpreted in this way.

We will be especially interested in free monoids that are submonoids of $A^{\ast}$ for some alphabet~$A$. For example, the set of words in $\{a\}^{\ast}$ of even length is a free submonoid of $\{a\}^*$. The set of words in $\{a\}^{\ast}$ of length not equal to 1 is also a submonoid but it is not free, since $a^5$ has two factorizations, $a^5= a^2\cdot a^3 = a^3\cdot a^2$ into words that cannot be further factored. The set of words in $\{a,b\}^*$ that start with $a$, together with the empty word, is a free submonoid in which the primes are of the form $ab^i$, for $i\in \N$.

Next we discuss some lemmas that are helpful in proving that submonoids of free monoids are free.

\begin{lem}
\label{l-prefix}
Suppose that $M$ is a submonoid of a free monoid $A^*$ with the property that every nonempty word in $M$ has a unique factorization of the form $xy$ where $x$ is irreducible in $M$ and $y\in M$. Then $M$ is free.
\end{lem}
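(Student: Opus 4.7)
The plan is to show that every element of $M$ admits a \emph{unique} factorization as a product of elements irreducible in $M$; once this is established, the set of irreducibles of $M$ serves as a set of primes and $M$ is isomorphic to the free monoid on them. The argument will be a pair of inductions on $l(w)$, the length of $w$ as a word in the ambient free monoid $A^*$; well-foundedness comes from the fact that any element irreducible in $M$ is necessarily nonempty, so peeling one off strictly decreases length.

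For existence, I would induct on $l(w)$ for $w\in M$. The empty word is the empty product of irreducibles. If $w$ is nonempty, the hypothesis supplies a decomposition $w=xy$ with $x$ irreducible in $M$ and $y\in M$; since $x$ is not the unit of $A^*$ we have $l(y)<l(w)$, so by induction $y$ is a product of irreducibles, and prepending $x$ gives a factorization of $w$.

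For uniqueness, suppose $w=p_1\cdots p_k=q_1\cdots q_l$ with each $p_i,q_j$ irreducible in $M$, again inducting on $l(w)$. The base case $w$ empty forces $k=l=0$. Otherwise, regrouping as $w=p_1\cdot(p_2\cdots p_k)=q_1\cdot(q_2\cdots q_l)$ exhibits two factorizations of $w$ in the form (irreducible in $M$)(element of $M$), so the hypothesis of the lemma gives $p_1=q_1$ and hence $p_2\cdots p_k=q_2\cdots q_l$. This common value lies in $M$ and is strictly shorter than $w$, so the inductive hypothesis yields $k=l$ and $p_i=q_i$ for all $i\geq 2$.

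There is no serious obstacle: the whole lemma is essentially a bookkeeping exercise once one notices that the given hypothesis is exactly the ``leftmost irreducible'' version of unique factorization, and one iterates. The only point requiring care is to keep the notion of ``irreducible in $M$'' distinct from ``prime of $A^*$'' throughout, and to verify at the end that the irreducibles of $M$ really are the primes of $M$ viewed as an abstract free monoid, which follows immediately from the uniqueness just proved.
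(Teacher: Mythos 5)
Your proof is correct and follows essentially the same route as the paper's: induction on word length, using the hypothesized unique factorization $w=xy$ with $x$ irreducible to pin down the first factor and then invoking the inductive hypothesis on the shorter word $y$. The only difference is that you spell out the existence of a factorization into irreducibles, which the paper had already noted separately, so no further comment is needed.
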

\begin{proof}
We prove by induction on $n$ that every word in $M$ of length $n$ has a unique factorization into irreducibles of $M$. The assertion holds trivially for $n=0$. Now suppose that $w$ is a word in $M$ of length $n>0$ and that all words in $M$ of length less than $n$ have unique factorizations into irreducibles.  Let $w=x_1 x_2\cdots x_k$ be a factorization of $w$ into irreducibles. Since $w$ has a unique factorization of the form $w=xy$ where $x$ is irreducible in $M$ and $y\in M$, we must have $x_1=x$ and $x_2\cdots x_k=y$. By the induction hypothesis, $y$ has a unique factorization into irreducibles, so $x_2,\dots, x_k$ are uniquely determined.
\end{proof}

Let us say that a submonoid $M$ of the free monoid $A^*$ satisfies \emph{Sch\"utzenberger's criterion} if it has the property that for every $p$, $q$, and $r$ in $A^*$, if $p$, $pq$, $qr$, and $r$ are in $M$ then $q$ is in $M$. The following useful result is due to Sch\"utzenberger \cite[Theorem 1.4]{schutz}; see also Tilson \cite{tilson}.

\begin{lem}
\label{l-schutz}
Let $M$ be a submonoid of the free monoid $A^*$. Then $M$ is free if and only if $M$ satisfies Sch\"utzenberger's criterion.
\end{lem}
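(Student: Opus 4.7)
My plan is to prove the two implications separately: the forward direction (freeness $\Rightarrow$ Schützenberger) by studying the unique prime factorization of $pqr$, and the backward direction by induction on word length, invoking Schützenberger's criterion at the first disagreement between two supposed factorizations into irreducibles.

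For the forward direction, let $M$ be a free monoid with prime set $P$, and suppose $p, pq, qr, r \in M$; if $q$ is the empty word the conclusion is immediate, so I may assume $q$ is nonempty. I would then study the unique prime factorization $pqr = \pi_1 \pi_2 \cdots \pi_m$ with all $\pi_i \in P$. The key observation is that because $pqr = p \cdot qr$ with $p, qr \in M$, concatenating the prime factorizations of $p$ and of $qr$ yields another prime factorization of $pqr$; by uniqueness, the primes of $p$ form an initial segment $\pi_1, \ldots, \pi_i$ while those of $qr$ form the tail $\pi_{i+1}, \ldots, \pi_m$. The same reasoning applied to $pqr = (pq) \cdot r$ produces an index $j$ at which the primes of $pq$ meet those of $r$. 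A length comparison in $A^*$ forces $i \leq j$, and then $q = \pi_{i+1} \pi_{i+2} \cdots \pi_j$ as a word in $A^*$, so $q \in M$.

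For the backward direction I would prove by induction on $|w|$ that every $w \in M$ has a unique factorization into irreducibles of $M$. The empty word is trivial. Given two factorizations $w = u_1 u_2 \cdots u_s = v_1 v_2 \cdots v_t$ with each $u_i, v_j$ irreducible in $M$, and $|u_1| \leq |v_1|$ without loss of generality, I would cancel and invoke the inductive hypothesis if $u_1 = v_1$; otherwise $u_1$ is a proper prefix of $v_1$ in $A^*$, and I write $v_1 = u_1 q$ with $q$ nonempty. Taking $p = u_1$ and $r = v_2 \cdots v_t$, the four elements $p,\ pq = v_1,\ r,$ and $qr = u_2 \cdots u_s$ all lie in $M$, so Schützenberger's criterion delivers $q \in M$, and then $v_1 = u_1 \cdot q$ is a product of two non-unit elements of $M$, contradicting the irreducibility of $v_1$.

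The step I expect to require the most care is the alignment argument in the forward direction: I must verify that a subword of $pqr$ that happens to lie in $M$ really corresponds to a contiguous block of primes in the factorization of $pqr$ rather than cutting across some prime. This is exactly what uniqueness of prime factorization in the free monoid $M$ provides; once it is set up, locating $q$ between the two cut points and checking that it is itself a product of primes becomes routine bookkeeping.
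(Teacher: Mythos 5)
Your proof is correct and follows essentially the same route as the paper: the ``free $\Rightarrow$ Sch\"utzenberger'' half via the unique prime factorization of $pqr$ is identical, and your induction for the converse is the paper's argument with Lemma~\ref{l-prefix} inlined, since the crucial step --- applying the criterion with $p=u_1$, $pq=v_1$, $qr=u_2\cdots u_s$, $r=v_2\cdots v_t$ to contradict the irreducibility of the longer first factor --- is exactly how the paper verifies the hypothesis of that lemma.
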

\begin{proof}
First, suppose that $M$ satisfies Sch\"utzenberger's criterion. It is enough to show that the hypothesis of Lemma \ref{l-prefix} holds. Let $w$ be a nonempty word of $M$, and suppose that $w=xv$ where $x$ is irreducible in $M$ and $v\in M$. Suppose also that $w$ can also be factored as $xy\cdot z$ where $y$ is nonempty and $xy$ and $z$ are in $M$. It is enough to show that $xy$ is not irreducible. Since $x$, $yz=v$, $xy$, and $z$ are in $M$, by Sch\"utzenberger's criterion we have $y\in M$, and this implies that $xy$ is not irreducible.

Next, suppose that $M$ is free, and suppose that $p$,  $pq$, $qr$, and $r$ are in $M$. Let $w=pqr$. Then $w$ has a unique factorization
$w=u_1u_2\cdots u_k$ into primes of $M$. The factorizations $w=p\cdot qr=pq\cdot r$ into elements of $M$ imply that for some $i\le j$, we have
$p=u_1\cdots u_i$, $q=u_{i+1}\cdots u_j$, and $r=u_{j+1}\cdots u_k$, and thus $q\in M$, so  Sch\"utzenberger's criterion holds.
\end{proof}

Now let $u$ and $v$ be words. We say that $u$ \emph{overlaps} with $v$ if there exist words $x$ and $y$ such that $ux=yv$ and $l(y) < l(u)$ (and thus $l(x)<l(v)$). For example, $ab$ overlaps with $bc$ because $ab\cdot c = a\cdot bc$. We call a word $w$ \emph{non-overlapping} if it does not overlap with itself. Thus $ab$ is non-overlapping, but $aa$ overlaps with itself.

Let $w$ be a word in the free monoid $A^*$. Let us denote by $A_w$ the set of words in $A^*$ that start with $w$, together with the empty word. Then $A_w$ is a submonoid of $A^*$.
If $A=\{a\}$ and $w=a^2$, then $A_w$ is not free, since $A_w$ is the set of words in $\{a\}^{\ast}$ of length not equal to 1. On the other hand, if $A=\{a,b\}$ and $w=ab$ then $A_w$ is easily seen to be free.

It will be conveniently to refer to $A_w$ as  ``the monoid of words in $A^*$ that start with $w$," and more generally, whenever we speak of the monoid of words with some property, it will be understood that the empty word is included, even if it does not have the property.

\begin{lem}\label{l-nonover}
The submonoid $A_w$ of the free monoid $A^{\ast}$ is free if and only if $w$ is non-overlapping.
\end{lem}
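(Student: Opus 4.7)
The plan is to prove both directions using Schützenberger's criterion (Lemma~\ref{l-schutz}): it is designed precisely for testing freeness of a submonoid of $A^*$ by inspecting quadruples $(p, pq, qr, r)$, and it is equally convenient for \emph{disproving} freeness by exhibiting a single bad quadruple.

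For the direction ``$w$ overlaps $\Rightarrow A_w$ not free,'' suppose $wx = yw$ with $x, y$ nonempty and $l(x), l(y) < l(w)$. Take $p = w$, $q = y$, $r = w$. Then $p, r \in A_w$; $pq = wy$ starts with $w$, so $pq \in A_w$; and $qr = yw = wx$ also starts with $w$, so $qr \in A_w$. But $y$ is nonempty with $l(y) < l(w)$, so $y$ cannot start with $w$, i.e., $q \notin A_w$, violating Schützenberger's criterion.

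For the direction ``$w$ non-overlapping $\Rightarrow A_w$ free,'' I would verify the criterion: assume $p, pq, qr, r \in A_w$ and show $q \in A_w$. The cases $q = \varepsilon$ and $l(q) \geq l(w)$ are immediate, the latter because $q$ is then a prefix of $qr$, which itself starts with $w$. The substantive subcase is $0 < l(q) < l(w)$. Here, since $qr \in A_w$ is nonempty and hence starts with $w$, the word $q$ must be a proper nonempty prefix of $w$, say $w = qw_1$ with $w_1$ nonempty; peeling off $q$ shows that $r$ starts with $w_1$. Since $r$ also starts with $w$ and $l(w_1) < l(w)$, the word $w_1$ is itself a prefix of $w$, say $w = w_1 w_2$ with $w_2$ nonempty (indeed $l(w_2) = l(q) > 0$). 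Then $qw = q(w_1 w_2) = (qw_1)w_2 = w \cdot w_2$, so taking $y = q$ and $x = w_2$ yields $yw = wx$ with $x, y$ nonempty and both of length strictly less than $l(w)$ --- contradicting non-overlapping.

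The only delicate step is this last subcase: one has to juggle the two decompositions $w = qw_1 = w_1 w_2$ simultaneously and then verify the nondegeneracy conditions ($x, y$ nonempty and shorter than $w$) required by the overlap definition. Once this bookkeeping is set up, Schützenberger's criterion does the rest.
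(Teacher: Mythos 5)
Your proof is correct. The ``non-overlapping $\Rightarrow$ free'' direction is essentially the paper's argument, only with the bookkeeping made explicit: the paper also verifies Sch\"utzenberger's criterion by observing that if $0<l(q)<l(w)$ then the occurrences of $w$ at the start of $qr$ and at the start of $r$ would force a self-overlap of $w$ (and your implicit use of ``$r$ is nonempty'' in that subcase is harmless, since $l(qr)\ge l(w)>l(q)$ forces it). Where you genuinely diverge is the converse. The paper does not use Lemma~\ref{l-schutz} there: from an overlap $t=wu=vw$ with $u,v$ shorter than $w$, it exhibits the word $wt$ with two distinct factorizations $w\cdot wu=wv\cdot w$ into irreducibles of $A_w$ (irreducibility being automatic because any reducible element of $A_w$ has length at least $2\,l(w)$), thereby refuting unique factorization directly. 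You instead take the quadruple $p=w$, $q=y$, $r=w$, note that $p$, $pq=wy$, $qr=wx$, $r$ all lie in $A_w$ while $q=y$ does not, and invoke the ``free implies criterion'' half of Lemma~\ref{l-schutz}. Both are valid; your route is shorter and treats the two directions uniformly through the same criterion, while the paper's gives an explicit witness of non-unique factorization that stands on its own without the converse half of Sch\"utzenberger's lemma.
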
\begin{proof} 

We first show that the condition is sufficient. Suppose that $w$ is non-overlapping. We will show that Sch\"utzenberger's criterion holds. Suppose that $p$, $pq$, $qr$, and $r$ are in $A_w$ and that $r$ is nonempty. Then since $qr$ and $r$ both start with $w$, and $w$ is non-overlapping, we must have $l(q)\ge l(w)$. This implies that since $qr$ starts with $w$, so does $q$, so $q\in A_w$.

For necessity, we show that if $w$ is overlapping then $A_w$ is not free. Suppose that $w$ is overlapping, so there exist words $t$, $u$, and $v$ such that
$t=wu=vw$, where $v$ (and thus also $u$) is shorter than $w$. We will show that $wt$ has two different factorizations into irreducibles in $A_w$. Any word in $A_w$ that is not irreducible must have length at least twice the length of $w$, so $wu$ and $wv$ are irreducible. Therefore $w\cdot wu$ and  $wv\cdot w$ are two different factorizations of $wt$ into irreducibles of $A_w$, so $A_w$ is not free.
\end{proof}

We can show similarly that if $u$ does not overlap with $v$ then the submonoid of $A^*$ of words that start with $u$ and end with $v$ is free.

\section{Fibonacci Compositions}

Let us define a {\it Fibonacci composition} of $n$ to be a composition of $n$ with parts 1 and 2. Thus the set of all Fibonacci compositions is the free monoid $\{1,2\}^*$. Most of our results are consequences of the fact there are $F_{n+1}$ Fibonacci compositions of $n$. (Many other identities are proved using this interpretation of Fibonacci numbers in Benjamin and Quinn \cite{bq}.)

Applying \eqref{e-freemonoid} to this free monoid with the weight function $\w(1)=1$, $\w(2)=2$, together with the fact that there are $F_{n+1}$ Fibonacci compositions of $n$, gives the generating function
\begin{equation*}
\sum_{n=0}^\infty F_{n+1}x^n =\frac{1}{1-x-x^2},
\end{equation*}
which is equivalent to \eqref{e-Fn}. Our proofs of $(i)$--$(vi)$ and other similar formulas are all based on free submonoids of the free monoid of Fibonacci compositions.

Now let us consider the monoid  $\{1,2\}_1$ of Fibonacci compositions that start with 1 (including the empty Fibonacci composition). By Lemma
\ref{l-nonover}, this monoid is free (though this is easy to see directly), and  the primes are the compositions of the form $\comp(1,2^i)$, for $i\ge0$.  So the generating function for primes in this free monoid is $\sum_{i=0}^\infty x^{2i+1}$.

It follows that if $s_n$ is the number of Fibonacci compositions of $n$ that start with 1, with $s_0=1$, then 
\begin{equation}
\label{e-odd}
\sum_{n=0}^\infty s_n x^n = \binv{-\sum_{i=0}^\infty x^{2i+1}},
\end{equation}
and the right side of \eqref{e-odd} is the generating function for compositions with odd parts. But the number of Fibonacci compositions of $n$ that start with 1 is just the number of Fibonacci compositions of $n-1$, so we see that for $n>0$, the number of compositions of $n$ with odd parts is equal to the number of Fibonacci compositions of $n-1$, which is $F_n$. So $(iii)$ holds. (This result seems to have been first given by Hoggatt \cite{hoggatt} and  Hoggatt and Lind \cite{hl3}.)

Our approach gives a simple bijective proof of this fact. Suppose that $c$ is a Fibonacci composition of $n-1$. Thus $\comp(1,c)$ is a Fibonacci composition of $n$ that starts with 1, which can be expressed uniquely as $\comp(1, 2^{i_1},1,2^{i_2}, \cdots, 1,2^{i_k})$. Then the corresponding composition of $n$ with odd parts is $\comp(1+2i_1, 1+2i_2,\cdots, 1+2i_k)$.

We could apply exactly the same analysis with the roles of 1 and 2 switched, and we would find that the number of Fibonacci compositions of $n$ that start with  2 is equal to the number of compositions of $n$ with all parts greater than 1, and this is $(ii)$.

A similar result applies to compositions with any set of two parts (cf.~Zeilberger \cite{zeilberger}, Sills \cite{sills}, and Munagi \cite[Theorem 1.2]{munagi}):

\begin{prop}
\label{p-2parts}
Let $p$ and $q$ be distinct integers. Then for $n\ge p$, the number of compositions of $n-p$ with parts $p$ and $q$ is equal to the number of compositions of $n$ with parts of the form $p+qi$, where $i\in \N$.
\end{prop}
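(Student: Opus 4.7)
The plan is to generalize the bijective proof used above for compositions with parts $1$ and $2$. I would consider the free monoid $\{p,q\}^*$, whose words are viewed as compositions with parts $p$ and $q$, equipped with the weight function determined by $\w(p)=p$ and $\w(q)=q$, so that the weight of a word equals the integer of which it is a composition.

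Next, I would form the submonoid $L$ of $\{p,q\}^*$ consisting of those nonempty words that start with the letter $p$, together with the empty word. Since the one-letter word $p$ is non-overlapping, Lemma~\ref{l-nonover} guarantees that $L$ is free. A nonempty word $w\in L$ fails to be irreducible in $L$ precisely when it contains the letter $p$ at some position after the initial one (one can then split $L$-factor just before that later occurrence). Hence the primes of $L$ are exactly the words $pq^i$ for $i\in\N$, whose weights are $p+qi$.

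From here the result follows by combining two bijections. On one hand, by unique factorization in $L$ (equivalently, by applying \eqref{e-freemonoid} to $L$), the number of words in $L$ of weight $n$ equals the number of compositions of $n$ into parts of the form $p+qi$ with $i\in\N$. On the other hand, for $n\ge p$, stripping off the initial $p$ is a bijection between words of $L$ of weight $n$ and arbitrary words of $\{p,q\}^*$ of weight $n-p$, i.e., compositions of $n-p$ with parts $p$ and $q$. Composing the two yields the claimed equality.

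There is essentially no real obstacle here; the only points needing verification are that the single letter $p$ is non-overlapping so that Lemma~\ref{l-nonover} applies, and that the primes of $L$ are precisely $\{pq^i : i\in\N\}$, both of which are immediate. The cases $(ii)$ (with $p=2$, $q=1$) and $(iii)$ (with $p=1$, $q=2$) already treated above serve as a sanity check for the argument.
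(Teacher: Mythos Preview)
Your proposal is correct and follows essentially the same approach as the paper: both consider the free submonoid $\{p,q\}^*_p$ of words starting with $p$, identify its primes as the words $pq^i$, and combine unique factorization with the bijection that prepends or strips the initial $p$. The paper makes the bijection slightly more explicit by writing the map $\comp(p,q^{i_1},\dots,p,q^{i_k})\mapsto\comp(p+qi_1,\dots,p+qi_k)$, but the content is the same.
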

\begin{proof}
First we note that prepending a part $p$ to a composition of $n-p$ with parts $p$ and $q$ gives a composition of $n$ that starts with $p$. In the free monoid $\{p,q\}^*_p$ of compositions with parts $p$ and $q$ that start with $p$, the primes are compositions of the form $\comp(p, q^i)$. Thus a bijection from the compositions of $n>0$ that start with $p$ to  the compositions of $n$ with parts of the form $p+qi$ is given by the map that takes $\comp(p, q^{i_1}, p, q^{i_2}, \cdots, p, q^{i_k})$ to the composition $\comp(p+q i_1, \,p+q i_2,\cdots, p+qi_k)$.
\end{proof}

The generating function identity that corresponds to Proposition \ref{p-2parts} is 
\begin{equation*}
1+\frac{x^p}{1-x^p-x^q}=\inv{-\frac{x^p}{1-x^q}}.
\end{equation*}

More generally, we can show that for any $k\ge1$, 
\[1+x^k\sum_{n=0}^\infty F_{n+1}x^n = 1+\frac{x^k}{1-x-x^2}\]
is the generating function for a free monoid.

\begin{prop}
\label{p-fmgen} Fix an integer $k\ge 2$. The monoid $M$ of Fibonacci compositions starting with $\comp(2, 1^{k-2})$ is a free monoid in which the primes are  of the form $\comp(2, 1^{k-2}, 1^i, q)$, where  $i$ is a nonnegative integer and $q$ is empty or  is a Fibonacci composition that starts with $2$ and contains no  $\comp(2, 1^{k-2})$. 

The generating function for the primes of $M$ is 
$
 {x^k}/({1-x-x^2+x^k})
$, and thus we have a combinatorial interpretation to the identity:
\begin{equation}
1+\frac{x^k}{1-x-x^2}=\biggl( 1- \frac{x^k}{1-x-x^2+x^k}
\biggr)^{-1}. \label{e-fmgen}
\end{equation}
\end{prop}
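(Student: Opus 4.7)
My approach is to apply Lemma~\ref{l-nonover} to establish freeness of $M$, identify the primes directly, and then read off the generating function via~\eqref{e-freemonoid}. Set $w = \comp(2, 1^{k-2})$, a word of length $k-1$ and weight $k$; then $M$ is the submonoid whose nonempty elements are the Fibonacci compositions starting with $w$. To apply Lemma~\ref{l-nonover}, I would verify that $w$ is non-overlapping: $w$ begins with the letter $2$, while every proper nonempty suffix of $w$ (when $k\ge 3$) consists entirely of $1$s, so no proper nonempty prefix of $w$ can equal a proper nonempty suffix; the case $k=2$ is trivial. Hence $M$ is free.

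Next I would determine the primes. A nonempty $u\in M$ is composite precisely when $u=u_1u_2$ with both $u_1,u_2$ nonempty in $M$; since each factor has length at least $k-1$, this occurs exactly when $w$ appears as a factor of $u$ starting at some position $j\ge k-1$. Writing $u = w\cdot 1^i\cdot q$ with $i\ge 0$ maximal, so that $q$ is either empty or begins with $2$, the letters of $u$ in positions $1,\ldots,k+i-2$ are all $1$s while $w$ begins with $2$; hence any occurrence of $w$ at a position $\ge k-1$ must start inside $q$ and (since $u$ ends where $q$ does) fit entirely inside $q$. Therefore $u$ is prime if and only if $q$ contains no factor $w$, matching the claimed description.

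Finally I would compute the generating function. Stripping the initial $w$ gives a weight-shifting bijection between the nonempty elements of $M$ and all Fibonacci compositions, so by~\eqref{e-Fn},
\[ \sum_{m\in M} x^{\w(m)} = 1 + x^k\sum_{n\ge 0} F_{n+1}\,x^n = 1+\frac{x^k}{1-x-x^2}. \]
Letting $P(x)$ denote the generating function for the primes of $M$, identity~\eqref{e-freemonoid} asserts $(1-P(x))^{-1}$ equals this expression; solving yields $P(x) = x^k/(1-x-x^2+x^k)$, which is precisely the identity~\eqref{e-fmgen}.

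The main obstacle is the bookkeeping in the prime-identification step: one must carefully track where the letter $2$ can appear in $u$ in order to conclude that every non-initial occurrence of $w$ in $u$ corresponds to an occurrence of $w$ as a factor of the tail $q$, with the maximality of $i$ ensuring that this correspondence is clean.
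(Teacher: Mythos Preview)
Your argument is correct. The freeness check via Lemma~\ref{l-nonover} and the identification of the primes agree with the paper (you supply more detail than the paper's ``easy to see'' for the latter). Where your proof diverges is in obtaining the prime generating function. The paper computes it directly from the prime description: the set of admissible tails $q$ (together with the empty word) is itself a free monoid $Q$ whose primes are $\comp(2,1^j)$ for $0\le j\le k-3$, giving $Q$ the generating function $(1-x)/(1-x-x^2+x^k)$, and hence
\[
P(x)=\frac{x^k}{1-x}\cdot\frac{1-x}{1-x-x^2+x^k}.
\]
You instead write down the generating function $1+x^k/(1-x-x^2)$ for all of $M$ and invert~\eqref{e-freemonoid} algebraically to recover $P(x)$. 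Your route is quicker and avoids the auxiliary monoid $Q$; the paper's route independently counts the primes combinatorially, so that~\eqref{e-fmgen} is confirmed by enumerating both sides rather than by solving one side from the other.
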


\begin{proof}
By Lemma \ref{l-nonover}, $M$ is a free monoid, and it is easy to see that the primes of $M$ are as stated in the proposition.
Let $Q$ be the monoid of Fibonacci compositions that start with  $2$ and contain no  
$\comp(2, 1^{k-2})$. Then $Q$ is a free monoid in which the set of primes consists of compositions $\comp(2, 1^{i})$,  with $0\le i\le k-3$. Thus the generating function for $Q$ is 
\[\biggl(1 -\sum_{j=2}^{k-1} x^j\biggr)^{-1}=\frac{1-x}{1-x-x^2+x^k},\]
and the generating function for the primes of $M$ is 
\begin{equation*}
\frac{x^k}{1-x}\cdot \frac{1-x}{1-x-x^2+x^k}=\frac{x^k}{1-x-x^2+x^k}.
\qedhere
\end{equation*}

\end{proof}

For $k=2$, $M$ is the free monoid of compositions that start with 2, and the primes, as we saw before, are of the form $\comp(2, 1^i)$, for $i\ge0$. 

For $k=3$, the generating function for the primes of $M$ 
is 
\begin{equation*}
\frac{x^3}{1-x-x^2+x^3}=\frac{x^3}{(1-x)^2(1+x)}=\frac{x^3+x^4}{(1-x^2)^2}
  =\sum_{n=3}^\infty \left\lfloor \frac{n-1}{2}\right\rfloor x^n
\end{equation*}
This  gives the formula 
\begin{equation}
\label{e-floor}
F_{n-2}=\sum_{a\in C(n)} \floor{\frac{a_1-1}{2}}\cdots \floor{\frac{a_k-1}{2}},
\end{equation}
for $n\ge2$. 
(Recall that $C(n)$ is the set of compositions $\comp(a_1,\dots, a_k)$ of $n$.)
We can explain this formula combinatorially by showing that there are $\floor{(n-1)/2}$ primes of $M$ of weight $n$. In this case, the primes are of the form $\comp(2,1^{i+1},2^j)$ for $i,j\in \N$. If such a word is a composition of $n$, then $i+1+2(j+1)=n$ so $i=n-2j-3$, and this is nonnegative for $j=0,1,\dots, \floor{n/2}-1$.

There is a slightly simpler interpretation of \eqref{e-floor}. Instead of  Fibonacci compositions that start with $\comp(2,1)$, we consider  Fibonacci compositions that start with 1 and end with 2. These form a free monoid in which the primes are of the form $\comp(1^i, 2^j)$, where $i,j\ge1$,  and there are $\lfloor (n-1)/2\rfloor$ of them of weight $n$. 

The cases when $k=2,3, 4,$ or $5$ in \eqref{e-fmgen} are specializations of the identities
\begin{align}
1+\frac{a}{1-a-b} &= \biggl( 1-\frac{a}{1-b}\biggr)^{-1},\label{e-k2}\\
1+\frac{ab}{1-a-b} &= \biggl( 1-\frac{ab}{(1-a)(1-b)}\biggr)^{-1},\label{e-k3}\\
1+\frac{a^2b}{1-a-b}&= \biggl(1-\frac{a^2b}{(1-a)(1-b-ab)}\biggr)^{-1},\label{e-k4}
\end{align}
which can also be interpreted in terms of free monoids.
 
There are other interesting applications of these formulas.  Taking $a=b=x$ in \eqref{e-k2}, shows that the total number of compositions of $n$, for $n>0$, is $2^{n-1}$. Taking $a=b=x$ in \eqref{e-k3} gives
 \begin{equation*}
2^{n-2}=\sum_{a\in C(n)}(a_1-1)\cdots (a_k-1), \quad n\ge 2.
\end{equation*}
Taking $a=b=x$ in \eqref{e-k4}, and using the fact that 
\begin{equation*}
\frac{x^3}{(1-x)(1-x-x^2)} = \frac{x}{1-x-x^2}-\frac{x}{1-x}=\sum_{n=1}^\infty (F_n -1) x^n,
\end{equation*}
gives
\begin{equation}
\label{e-Fsum}
2^{n-3}=\sum_{a\in C(n)} (F_{a_1}-1)\cdots (F_{a_k}-1), \quad n\ge 3.
\end{equation}
(the nonzero terms come from compositions into parts greater than 2). Formula \eqref{e-Fsum} ``explains" why the first three nonzero values of $F_n-1$ are the first three powers of 2 (i.e., $F_3-1=1$, $F_4-1=2$, $F_5 -1 = 4$), since for $3\le n<6$ there is just one nonzero term in the sum in \eqref{e-Fsum}. That $F_n-1=2^{n-3}$ for $3\le n < 6$  can also be seen from the formula
\begin{equation*}
\frac{x}{1-x-x^2}-\frac{x}{1-x}=\frac{x^3}{1-2x+x^3}.
\end{equation*}

Taking $a=x$, $b=x^m$ in \eqref{e-k3} gives a generalization of \eqref{e-floor}:

\begin{prop}
\label{p-mfib}
Fix $m\ge1$ and define the numbers $r_n$ by 
\begin{equation*}
\sum_{n=0}^\infty r_n x^n = \frac{1}{1-x-x^m}.
\end{equation*}
Then for $n\ge m+1$ we have
\begin{equation*}
r_{n-m-1} = \sum_{a\in C(n)} \floor{\frac{a_1-1}{m}}\cdots \floor{\frac{a_k-1}{m}},
\end{equation*}
where the only nonzero terms come from compositions in which every part is at least $m+1$.
\end{prop}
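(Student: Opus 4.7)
The plan is to specialize identity \eqref{e-k3} with $a = x$ and $b = x^m$, which gives
$$1 + \frac{x^{m+1}}{1-x-x^m} = \left(1 - \frac{x^{m+1}}{(1-x)(1-x^m)}\right)^{-1}.$$
The left side equals $1 + \sum_{n \ge 0} r_n x^{n+m+1}$, so for $n \ge m+1$ its coefficient of $x^n$ is $r_{n-m-1}$. The right side has exactly the shape of Lemma~\ref{l-gf}: its coefficient of $x^n$ is $\sum_{a \in C(n)} u_{a_1} \cdots u_{a_k}$, where $u_i := [x^i]\, x^{m+1}/((1-x)(1-x^m))$. Matching the two coefficients reduces the proposition to the combinatorial identification $u_i = \floor{(i-1)/m}$ for every $i \ge 1$.

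To verify this identification, I would expand
$$\frac{x^{m+1}}{(1-x)(1-x^m)} = \left(\sum_{s \ge m+1} x^s\right)\left(\sum_{j \ge 0} x^{mj}\right),$$
so that $u_i$ counts the pairs $(s, j)$ with $s \ge m+1$, $j \ge 0$, and $s + mj = i$; equivalently, the integers $j$ in the range $0 \le j \le (i-m-1)/m$. A short calculation (treating $i \le m$ and $i \ge m+1$ separately to handle the boundary) shows this count is exactly $\floor{(i-1)/m}$. Substituting back yields the stated formula.

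The final remark about nonzero terms is then immediate: $\floor{(a_i - 1)/m} = 0$ precisely when $a_i \le m$, so a composition containing any part at most $m$ contributes a zero factor. I do not expect a serious obstacle here; the whole proposition is essentially a clean specialization of identity \eqref{e-k3} plus Lemma~\ref{l-gf}, mirroring how the case $m = 2$ led to \eqref{e-floor}. The one place that needs care is keeping the shift by $m+1$ and the off-by-one in the floor identification straight, which I would cross-check on small cases (for example, $m = 3$, $i = 4$: $u_4 = 1 = \floor{3/3}$) before writing it up.
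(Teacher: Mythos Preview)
Your proposal is correct and follows essentially the same route as the paper: specialize \eqref{e-k3} at $a=x$, $b=x^m$, identify the left side with $1+\sum r_{n-m-1}x^n$, and then show that the coefficient of $x^i$ in $x^{m+1}/((1-x)(1-x^m))$ equals $\lfloor (i-1)/m\rfloor$. The only cosmetic difference is in this last step: the paper rewrites the fraction as $x(x^m+\cdots+x^{2m-1})/(1-x^m)^2$ and reads off the coefficient directly, whereas you count pairs $(s,j)$---both arguments are equally short and valid.
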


\begin{proof}
By \eqref{e-k3} with $a=x$, $b=x^m$, we have
\begin{equation*}
1+\sum_{n=m+1}^{\infty} r_{n-m-1}x^n= 1+\frac{x^{m+1}}{1-x-x^m}=
\left(1-\frac{x^{m+1}}{(1-x)(1-x^m)}\right)^{-1}.
\end{equation*}
We have
\begin{align*}
\frac{x^{m+1}}{(1-x)(1-x^m)} 
  &=x^{m+1}\frac{1+x+\cdots+x^{m-1}}{(1-x^m)^2}\\
  &=x (x^m+x^{m+1}+\cdots+x^{2m-1})(1+2x^m + 3x^{2m}+\cdots)\\
  &=\sum_{n=1}^\infty \floor{\frac{n-1}m}x^n
\end{align*}
and the result follows.
\end{proof}

It is not hard to give a combinatorial interpretation to 
Proposition \ref{p-mfib}. The number $r_n$ counts compositions of $n$ with parts 1 and $m$, so for $n\ge m+1$, $r_{n-m-1}$ is the number of such compositions of $n$ that start with 1 and end with $m$. These compositions form a free monoid in which the primes are of the form 
form $\comp(1^i, m^j)$, where $i,j\ge1$,  and there are $\lfloor (n-1)/m\rfloor$ of them of weight~$n$.

The numbers $r_n$ for $m=3,4,5,6,\dots,15$ are sequences \seqnum{A000930}, \seqnum{A003269}, \seqnum{A003520},   \seqnum{A005708}--\seqnum{A005711}, and 
\seqnum{A017898}--\seqnum{A017909}.

\section{Multisection}
\label{s-multi}

To explain results such as $(iv)$--$(vi)$ of Section \ref{s-intro}, we need to consider Fibonacci compositions of only even or only odd numbers. More generally, given $m$ and $i$, we may consider Fibonacci compositions of numbers congruent to $i$ modulo $m$.

The following result, which follows easily from Lemma \ref{l-schutz}, tells us that the relevant monoids are free.

\begin{lem}
\label{l-multisect}
Let $M$ be a free monoid with a weight function and let $m$ be a positive integer.  Then the submonoid of $M$ consisting of 
elements of weight divisible by $m$ is free.
\end{lem}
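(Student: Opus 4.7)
The plan is to invoke Schützenberger's criterion (Lemma \ref{l-schutz}) directly. Since $M$ is free, it is isomorphic to $P^{\ast}$, where $P$ is its set of primes, so we may regard the submonoid $N$ of elements of weight divisible by $m$ as a submonoid of a free monoid on the alphabet $P$. The goal is then to verify that $N$ satisfies Schützenberger's criterion.

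For this, I would take arbitrary $p, q, r \in P^{\ast}$ with $p$, $pq$, $qr$, and $r$ all in $N$, and show that $q \in N$. The entire content of the argument is that the weight function is additive, so $\omega(q) = \omega(pq) - \omega(p)$ is a difference of two multiples of $m$, hence a multiple of $m$. Thus $q \in N$, Schützenberger's criterion holds, and Lemma \ref{l-schutz} yields that $N$ is free.

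The only subtlety worth flagging is the framing: Lemma \ref{l-schutz} is stated for submonoids of $A^{\ast}$, whereas here $M$ is an abstract free monoid. The fix is to use the isomorphism $M \cong P^{\ast}$ at the start so that we are genuinely in the setting of that lemma. There is no real obstacle beyond this bookkeeping; the computation $\omega(q) = \omega(pq) - \omega(p)$ does all the work, which is exactly why the excerpt says the result follows easily.
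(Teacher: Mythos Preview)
Your proposal is correct and matches the paper's approach exactly: the paper does not spell out a proof but simply remarks that the lemma ``follows easily from Lemma~\ref{l-schutz},'' and your verification of Sch\"utzenberger's criterion via $\omega(q)=\omega(pq)-\omega(p)$ is precisely the intended argument. Your observation about passing to $P^{\ast}$ to apply Lemma~\ref{l-schutz} as stated is a fair point of bookkeeping.
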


We shall apply Lemma \ref{l-multisect} to free monoids of Fibonacci words.
Let
\[f_{m,i}=\sum_{n=0}^\infty F_{mn+i+1} x^n,\]
so that the coefficient of $x^n$ in $f_{m,i}$ is the number of Fibonacci compositions of $mn+i$. 

It follows from Lemma \ref{l-multisect} that the monoid of Fibonacci compositions of multiples of $m$ is free. Let us define a weight function on this monoid by taking the weight of a composition of $mn$ to be $n$. Then the generating function for this monoid is $f_{m,0}$. 

Now let $w$ be a non-overlapping Fibonacci composition of an integer $r=mk-i$, where $k\ge1$ and $0\le i <m$. Then by Lemmas \ref{l-nonover} and \ref{l-multisect}, the monoid of Fibonacci compositions  of multiples of $m$,  starting with $w$, is a free monoid, and the generating function for this free monoid is $1+x^k f_{m,i}$.

In this section, we consider a few cases of these generating functions for arbitrary $m$, and in sections \ref{s-bisection} and \ref{s-trisection} we look in more detail at the cases $m=2$ and $m=3$.

It is not hard to show (e.g., by using the Binet formula for Fibonacci numbers) that
\begin{equation}
\label{e-multi}
\sum_{n=0}^\infty F_{mn+j} x^n=\frac{F_j + (-1)^j F_{m-j} x}{  1-L_m x+(-1)^m x^2},
\end{equation}
where $L_n$ is the $n$th Lucas number ($L_0=2$ and $L_n = F_{n-1}+F_{n+1}$). See, for example, Hoggatt and Lind \cite[equation (4.18)]{hl}.
We can find simple free monoid interpretations for  generating functions for $F_{mn+1}$ and $F_{mn-1}$.

For $F_{mn+1}$, we have the following formula, due to Hoggatt \cite{hoggatt}.

\begin{prop}
\label{p-m0}
\begin{equation}
\label{e-m0}
f_{m,0} = \sum_{n=0}^\infty F_{mn+1}x^n 
=\frac{1-F_{m-1}x}{1-L_m x + (-1)^m x^2}
=
\left( 1- F_{m+1}x - \frac{F_m^2 x^2}{1-F_{m-1}x}\right)^{-1}.
\end{equation}
\end{prop}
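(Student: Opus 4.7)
The plan is to establish both equalities in \eqref{e-m0}. The leftmost equality is an immediate specialization of \eqref{e-multi} to $j=1$: substituting $F_1 = 1$ and $(-1)^1 F_{m-1} = -F_{m-1}$ yields the displayed numerator $1 - F_{m-1}x$, so no further work is required there.

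For the second equality, I would supply a free-monoid interpretation and then invoke \eqref{e-freemonoid}. By Lemma \ref{l-multisect}, the monoid $M$ of Fibonacci compositions whose sum is a multiple of $m$ is free. Weighting a Fibonacci composition of $mn$ by $n$, the generating function of $M$ is exactly $f_{m,0}$, so the task reduces to identifying the primes of $M$ and summing $x^{\w(p)}$ over them. A Fibonacci composition of $mn$ is prime in $M$ iff none of its proper intermediate partial sums $s_1,\ldots,s_{k-1}$ is divisible by $m$.

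The structural observation I would use is that a $\{1,2\}$-walk from $0$ to $mn$ that avoids all forbidden values $m, 2m, \ldots, (n-1)m$ must step over each forbidden $jm$ by a $2$-step from $jm-1$ to $jm+1$. Hence for $n \ge 2$ a prime of weight $n$ factors uniquely as an outer block from $0$ to $m-1$, a jumping $2$, a middle block from $jm+1$ to $(j+1)m-1$ for each $j=1,\ldots,n-2$ (each preceded by its own $2$), and a final outer block from $(n-1)m+1$ to $mn$. The two outer blocks are Fibonacci compositions of $m-1$ ($F_m$ choices each) and each of the $n-2$ middle blocks is a Fibonacci composition of $m-2$ ($F_{m-1}$ choices each), giving $F_m^2 F_{m-1}^{n-2}$ primes of weight $n$. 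For $n=1$, every Fibonacci composition of $m$ is vacuously prime, so there are $F_{m+1}$ primes of weight $1$. Summing yields
\[
\sum_{p\in P}x^{\w(p)} = F_{m+1}x + \sum_{n\ge 2} F_m^2 F_{m-1}^{n-2}\,x^n = F_{m+1}x + \frac{F_m^2 x^2}{1-F_{m-1}x},
\]
and \eqref{e-freemonoid} delivers the rightmost expression of \eqref{e-m0}.

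As an independent algebraic sanity check, equality of the two closed forms is equivalent to $(1-F_{m-1}x)(1-F_{m+1}x) - F_m^2 x^2 = 1 - L_m x + (-1)^m x^2$, which reduces at once to $L_m = F_{m-1}+F_{m+1}$ and the Cassini-like identity $F_{m-1}F_{m+1} - F_m^2 = (-1)^m$. The main obstacle I expect is the combinatorial identification of the primes for $n\ge 2$; once one sees that avoiding each intermediate multiple of $m$ forces a specific $2$-step at a predetermined position, the remaining count splits into independent Fibonacci sub-counts and the rest is bookkeeping.
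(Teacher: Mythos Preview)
Your argument is correct. The paper's formal proof is the purely algebraic route---specialize \eqref{e-multi} at $j=1$, then verify the last equality by expanding $(1-F_{m-1}x)(1-F_{m+1}x)-F_m^2x^2$ and invoking $L_m=F_{m-1}+F_{m+1}$ together with Cassini's identity---which is exactly what you present as your ``sanity check.'' The combinatorial description of the primes that you use as your main proof is the same one the paper gives, but the paper places it \emph{after} the proposition as an interpretation rather than as the proof itself. So the content is identical; you have simply swapped which argument is primary. The trade-off is that your version makes the free-monoid structure do the work (and explains where the $F_m^2 F_{m-1}^{n-2}$ comes from), while the paper's version is a two-line computation that does not depend on correctly identifying the primes.
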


\begin{proof} The formula is a straightforward computation, using the case $j=1$ of \eqref{e-multi}, the formula $L_m=F_{m-1}+F_{m+1}$, and Cassini's identity
$F_{m+1}F_{m-1} - (-1)^m = F_m^2$.
\end{proof}
We can interpret Proposition \ref{p-m0} combinatorially by describing the primes of the free monoid of Fibonacci compositions of multiples of $m$. The primes of weight one are the Fibonacci compositions of $m$, which are counted by $F_{m+1}$. The primes of weight $n>1$ are of the form 
$\comp(u, 2, v_1, 2, v_2, 2,\dots, 2,v_k, 2, w)$ where $u$ and $w$ are Fibonacci compositions of $m-1$ and each $v_i$ is a Fibonacci composition of $m-2$.

There is an analogous formula for $F_{mn-1}$, corresponding to compositions of multiples of $m$ that start with 2, which have the generating function $1+xf_{m,m-2}$. A straightforward  computation gives the following result, also due to Hoggatt \cite{hoggatt}.

\begin{prop}
\label{p-m m-2}
\begin{equation}
\label{e-mm-2}
1+xf_{m,m-2} = 1+\sum_{n=1}^\infty F_{mn-1}x^n =\left(1- F_{m-1}x - \frac{F_m^2 x^2}{1-F_{m+1}x}\right)^{-1}.
\end{equation}
\end{prop}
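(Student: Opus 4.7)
My plan is to follow the template used for Proposition~\ref{p-m0}: unravel the definition of $f_{m,m-2}$, apply the multisection formula \eqref{e-multi} to put the left-hand side of \eqref{e-mm-2} in closed form, and then match it against the right-hand side using $L_m = F_{m-1} + F_{m+1}$ together with Cassini's identity. The first equality in \eqref{e-mm-2} is an index shift: since $f_{m,i} = \sum_{n \ge 0} F_{mn+i+1}x^n$, we have $xf_{m,m-2} = \sum_{n \ge 0} F_{mn+m-1}x^{n+1} = \sum_{n \ge 1} F_{mn-1}x^n$.

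For the main equality, I would apply \eqref{e-multi} with $j = m-1$, so that $F_{m-j} = F_1 = 1$, obtaining
\[
f_{m,m-2} = \frac{F_{m-1} + (-1)^{m-1}x}{1 - L_m x + (-1)^m x^2}.
\]
Multiplying by $x$ and adding $1$ over a common denominator, the quadratic terms $(-1)^{m-1}x^2$ and $(-1)^m x^2$ cancel in the numerator, and the linear coefficient simplifies via $F_{m-1} - L_m = -F_{m+1}$, giving
\[
1 + xf_{m,m-2} = \frac{1 - F_{m+1}x}{1 - L_m x + (-1)^m x^2}.
\]

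To match the right-hand side of \eqref{e-mm-2}, I would clear the inner fraction:
\[
\left(1 - F_{m-1}x - \frac{F_m^2 x^2}{1 - F_{m+1}x}\right)^{-1} = \frac{1 - F_{m+1}x}{(1 - F_{m-1}x)(1 - F_{m+1}x) - F_m^2 x^2}.
\]
The denominator expands to $1 - (F_{m-1} + F_{m+1})x + (F_{m-1}F_{m+1} - F_m^2)x^2$, which equals $1 - L_m x + (-1)^m x^2$ by the definition of $L_m$ and Cassini's identity. The calculation is parallel to Proposition~\ref{p-m0} with the roles of $F_{m-1}$ and $F_{m+1}$ interchanged, so there is no real obstacle: the only subtlety is keeping track of the sign $(-1)^{m-1}$ supplied by \eqref{e-multi}, which is precisely the sign needed for the two $x^2$ terms to cancel when $1$ is added. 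A combinatorial interpretation analogous to the one following Proposition~\ref{p-m0} is also available, with the primes of the free monoid of Fibonacci compositions of multiples of $m$ starting with $2$ being enumerated by $F_{m-1}x + F_m^2 x^2/(1 - F_{m+1}x)$, but the algebraic verification above is the most direct route to the stated identity.
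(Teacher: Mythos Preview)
Your proof is correct and is exactly the ``straightforward computation'' the paper alludes to: you apply \eqref{e-multi} with $j=m-1$, simplify using $L_m=F_{m-1}+F_{m+1}$, and invoke Cassini's identity $F_{m-1}F_{m+1}-F_m^2=(-1)^m$, precisely mirroring the proof of Proposition~\ref{p-m0} with the roles of $F_{m-1}$ and $F_{m+1}$ swapped. The paper itself gives no further details beyond calling this a straightforward computation, so there is nothing to compare.
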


There is a simple combinatorial interpretation for Proposition \ref{p-m m-2}. In the free monoid of Fibonacci compositions of multiples of $m$ that start with 2, the primes of weight 1 are the Fibonacci compositions of $m$ that start with 2, and there are $F_{m-1}$ of them. Every other prime of this free monoid is of the form $\comp(2, u, v_1, \dots, v_k, w)$, for $k\ge0$, where $u$ and $w$ are Fibonacci composition of $m-1$ (counted by $F_m$) and each $v_i$ is a Fibonacci composition of $m$ (counted by $F_{m+1}$).

There don't seem to be results as simple as Propositions \ref{p-m0} and \ref{p-m m-2} for $f_{m,i}$ with $i$ not equal to~0 or $m-2$.

For $i=m-1$, we have by \eqref{e-multi} 
\begin{equation}
\label{e-mm-1}
f_{m,m-1}=\sum_{n=0}^\infty F_{m(n+1)}=\frac{F_m}{1-L_mx+(-1)^mx^2},
\end{equation}
and a straightforward calculation gives
\begin{equation}
\label{e-mm-0}
1+xf_{m,m-1}
=\biggl(1-\frac{F_m x}{1-2F_{m-1}x+(-1)^m x^2}\biggr)^{-1}.
\end{equation}
It is possible to describe the corresponding primes explicitly, but there doesn't seem to be  a simple combinatorial explanation for their generating function  \eqref{e-mm-0}.

We note also that if $m$ is odd then by \eqref{e-multi},
\[f_{m,m-1}=\sum_{n=0}^\infty F_{mn+m} x^n = \frac{F_m}{1-L_m x -x^2}.\]
Although this formula looks like it should have a simple combinatorial explanation, there does not seem to be one.

In the next two sections we consider bisection and trisection in more detail.

\section{Bisection}
\label{s-bisection}
We now consider the case $m=2$ of \eqref{e-multi}, which gives 
\begin{equation*}
f_{2,0}=\frac{1-x}{1-3x+x^2}
\end{equation*}
and
\begin{equation*}
f_{2,1}=\frac{1}{1-3x+x^2}.
\end{equation*}
We leave it to the reader to give a combinatorial interpretation of the formula $f_{2,1} = (1-x)^{-1}f_{2,0}$.

The case $m=2$ of \eqref{e-m0} is 
\begin{equation}
\label{e-bisfibo0}
    f_{2,0}=\frac{1-x}{1-3x+x^2} =(1-2x-x^2-x^3-\cdots)^{-1} 
    =\biggl( 1- 2x - \frac{x^2}{1-x}
    \biggr)^{-1}. 
\end{equation}
and the primes of the free monoid of Fibonacci compositions of even integers are the composition~$\comp(2)$ and compositions of the form $\comp(1, 2^i, 1)$ for non-negative integers $i$. The generating function for the primes, together with~\eqref{e-freemonoid}, gives  formula $(vi)$ of section \ref{s-intro},
\begin{equation*}F_{2n+1} =\mathsum_{a\in C(n)} 2^{\#\{i: a_i=1\}}.\end{equation*}

We also have the identity
\begin{align}
1+x^k f_{2,0} &=  1+\frac{x^k(1-x)}{1-3x+x^2}  =\biggl( 1 - \frac{x^k(1-x)}
   {1-3x+x^2+x^k-x^{k+1}} \biggr)^{-1}, 
    \label{e-fibogenx}
\end{align}
so ${x^k(1-x)}/(
   {1-3x+x^2+x^k-x^{k+1}})$
is the generating function for primes in the free monoid of Fibonacci compositions of even integers that start with $\comp(2,1^{2k-2})$. Only the case $k=1$ of \eqref{e-fibogenx}, which is also the case $m=2$ of \eqref{e-mm-2}, is especially simple (though the prime counting sequence for $k=2$ is \seqnum{A052921}). Here we have
\begin{equation*}
 1+x f_{2,0} = \left( 1- x-\frac{x^2}{1-2x}\right)^{-1}=(1-x-x^2-2x^3-4x^4 - 8x^5-\cdots)^{-1},
\end{equation*}
which gives the formula
\[F_{2n-1} = \sum_{a\in C(n)} 2^{\#\{i: a_i=1\}+n-2k},\]
where $k$ is the number of parts of the composition $a$.

We note also that the continued fraction formula
\begin{equation*}
1+xf_{2,0} = \frac{1-2x}{1-3x+x^2} = 
 \cfrac{1}{1-\cfrac{x}{1-\cfrac{x}{1-x}}}
\end{equation*}
shows that $F_{2n-1}$, for $n>0$, is the number of Dyck paths of length $2n$ and height at most 3 (see, e.g., Flajolet \cite{flajolet}).

The analogous formulas for $f_{2,1}$ are somewhat simpler than those for $f_{2,0}$.

\begin{prop}
\label{p-sgen}
Fix an integer $k\ge 1$. The set of Fibonacci compositions of even numbers that start with $\comp(1, 2^{k-1})$ is a free monoid whose primes are of the form $\comp(1, 2^{k-1}, 2^i, q, 1, 2^j)$, where $i$ and $j$ are nonnegative integers and  $q$ is an element of the free monoid $Q$  with primes \[\comp(1, 2^l, 1, 2^m), \text{ for }l\ge 0 \text{ and } 0\le m\le k-2.\] 

This gives identity
\begin{equation}
1+x^k f_{2,1}=1+\frac{x^k}{1-3x+x^2}=\biggl( 1-\frac{x^k}{1-3x+x^2+x^k}\biggr)^{-1}.
\label{e-sgen}
\end{equation}
For $k=1$, the free monoid $Q$ contains only the empty word, and the primes are of the form
$\comp(1, 2^i,1,2^j)$ for $i,j\ge0$.
\end{prop}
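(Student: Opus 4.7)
My plan is to verify (a) freeness of $M$, (b) that the primes of $M$ are as described, and (c) the generating function identity.

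For (a), the word $\comp(1, 2^{k-1})$ is non-overlapping: any proper nonempty suffix starts with $2$ while the word itself starts with $1$, so no suffix can equal a prefix. Lemma \ref{l-nonover} then gives that $\{1,2\}^*_{\comp(1,2^{k-1})}$ is free, and Lemma \ref{l-multisect} applied with $m=2$ (weighting each composition by its sum) shows that the submonoid $M$ of elements of even weight is free.

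For (b), I would write each nonempty $w \in M$ in its canonical form $\comp(1, 2^{a_1}, 1, 2^{a_2}, \dots, 1, 2^{a_n})$, where $a_1 \ge k-1$ (because $w$ starts with $\comp(1, 2^{k-1})$) and $n$ is the total number of $1$'s; the even-weight condition forces $n$ to be even, say $n = 2r$. A splitting $w = uv$ with $u,v \in M$ both nonempty corresponds to cutting $w$ just before some $1$, and a short parity check shows that the admissible cut positions are precisely those before the $p$-th $1$ for $p$ odd with $3 \le p \le 2r-1$ and $a_p \ge k-1$. Hence $w$ is prime in $M$ iff $a_p \le k-2$ for every such $p$. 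Setting $i = a_1 - (k-1)$, $j = a_{2r}$, and regrouping the middle of $w$ into blocks of the shape $\comp(1, 2^{a_{2u}}, 1, 2^{a_{2u+1}})$, the constrained odd-indexed $a$'s are precisely the $m$'s (bounded by $k-2$) and the unconstrained even-indexed $a$'s are the $l$'s, matching the claimed primes of $Q$. Freeness of $Q$ follows by reading any $q \in Q$ left-to-right: its first prime is uniquely determined by the locations of the first two $1$'s together with the run of $2$'s following the second $1$ up to the third $1$ or end of word.

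For (c), the generating function (in $x$ tracking half the weight) for the primes of $Q$ is
\[\sum_{l \ge 0,\,0 \le m \le k-2} x^{l+m+1} = \frac{x(1-x^{k-1})}{(1-x)^2},\]
so \eqref{e-freemonoid} gives $Q$ itself the generating function $(1-x)^2/(1-3x+x^2+x^k)$. The generating function for primes of $M$ is then $x^k \cdot (1-x)^{-2} \cdot (1-x)^2/(1-3x+x^2+x^k) = x^k/(1-3x+x^2+x^k)$, and a final application of \eqref{e-freemonoid} yields \eqref{e-sgen}. The main obstacle is step (b): the parity bookkeeping that identifies exactly which cuts in $w$ are $M$-valid, and hence pins the primality condition on precisely the odd-indexed middle $a$'s.
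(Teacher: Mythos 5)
Your proposal is correct and follows essentially the same route as the paper: freeness of $M$ via Lemmas \ref{l-nonover} and \ref{l-multisect}, identification of the primes as compositions with an even number of 1s in which each odd-indexed 1 after the first is followed by at most $k-2$ 2s, and the same generating-function computation $x^k(1-x)^{-1}u(x)(1-x)^{-1}$ with $u(x)=(1-x)^2/(1-3x+x^2+x^k)$. The only difference is that your cut-position parity argument spells out the prime characterization that the paper dismisses as ``clear.''
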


\begin{proof}
Let $M$ be the monoid of Fibonacci compositions of even numbers that start with $\comp(1, 2^{k-1})$. By Lemma~\ref{l-nonover},  $M$ is a free monoid. A prime of $M$ is a Fibonacci composition that starts with $\comp(1, 2^{k-1})$ and contains an even number of 1s, where the $j$th part 1, for $j>1$ and odd, is followed by at most $k-2$ parts 2. It is clear that these primes are as described in the proposition.

The generating function for $Q$ is
  \[u(x)=\biggl( 1- \sum_{i=1}^{k-1}\frac{x^i}{1-x}\biggr)^{-1}
  =\frac{(1-x)^2}{1-3x+x^2+x^k}.\]
Thus the generating function for the primes of $M$ is
  \begin{align*}
   & \phantom{=} x^k \cdot \frac{1}{1-x}\cdot  u(x) \cdot \frac{1}{1-x}= \frac{x^k}{1-3x+x^2+x^k}.\qedhere
  \end{align*}
\end{proof}

For $k=1$, \eqref{e-sgen} gives
\[1+\frac{x}{1-3x+x^2}=\biggl( 1-\frac{x}{(1-x)^2}\biggr)^{-1}=(1-x-2x^2-3x^3-\cdots)^{-1}.\]
We can see directly that there are $n$ prime compositions of $2n$, since these are compositions of $2n$ of the form $\comp(1,2^i,1,2^j)$ and there are $n$ solutions of $i+j=n-1$. 

This identity gives  formula $(iv)$ of Section \ref{s-intro},
\begin{equation}
\label{e-iv}
F_{2n} = \mathsum_{a\in C(n)} a_1 a_2 \cdots a_k,
\end{equation}
for $n\ge 1$, as shown by Moser and Whitney \cite{mw}.

Stanley \cite[p.~52]{ec1} gives a combinatorial interpretation of \eqref{e-iv}: The sum $\mathsum a_1 a_2 \cdots a_k$ is the number of ways of inserting at most one vertical bar in each of the $n-1$ spaces separating a line of $n$ dots, and then circling one dot in each compartment. Replacing each bar by a 1, each uncircled dot by a 2, and each circled dot by a 1 gives all Fibonacci compositions of $2n-1$ exactly once. As an example for $n=8$, we have
\[ \bullet \,\ \odot \ \mid \ \odot\, \ \bullet \ \mid \ \odot \ \mid \bullet \ \bullet \ \odot \quad \Longleftrightarrow \quad 21112111221
\]
We can explain this bijection in terms of our free monoid approach: if we insert a bar at the beginning of one of these arrangements of bars and dots, then we have a sequence of configurations of the form
$\mid\ \bullet^i\ \odot\ \bullet^{\kern .08 em j}$, and this configuration corresponds to the prime $\comp(1,2^i,1,2^j)$.

For $k=2$, the generating function for the primes in Proposition~\ref{p-sgen} is
\[\frac{x^2}{1-3x+2x^2}=\frac{x^2}{(1-2x)(1-x)}=\mathsum_{n=2}^\infty \mathsum_{i=0}^{n-2} 2^i x^n = \mathsum_{n=2}^\infty (2^{n-1}-1) x^n .\]

This gives formula $(v)$ of Section \ref{s-intro}:
\begin{equation}
\label{e-v}
F_{2n-2}=\mathsum_{a\in C(n)} (2^{a_1-1}-1)(2^{a_2-1}-1)\cdots (2^{a_k-1}-1),
\end{equation}
for $n\ge 1$.
From Proposition \ref{p-sgen}, the prime compositions corresponding to \eqref{e-v} are of the form 
\[\comp(1,2, 2^{l_0}, 1, 2^{l_1}, 1^2, 2^{l_2}, 1^2,\cdots, 1^2, 2^{l_m}, 1^2, 2^{l_{m+1}}),\]
with $m\ge0$,  where
$l_0,\dots, l_{m+1}$ are nonnegative integers. 
To see combinatorially that there are $2^{n-1}-1$ such compositions of $2n$, we start 
with the composition of $2n-2$  with $n-1$ parts, all equal to 2. We choose some nonempty subset of the parts, which we can do in $2^{n-1}-1$ ways. We replace the first selected 2 with 1 and replace each other selected 2 with $\comp(1,1)$. Finally we insert $\comp(1,2)$ at the beginning.

For $k>2$, the primes in Proposition \ref{p-sgen} are more complicated, but for $k=3$ and $k=4$ the  sequences that count them are \seqnum{A048739} and \seqnum{A077849}.

\section{Trisection}
\label{s-trisection}

Now we consider trisections of the Fibonacci sequence, for which we have
\begin{gather*}
f_{3,0}=\frac{1-x}{1-4x-x^2}\\
f_{3,1} = \frac{1+x}{1-4x-x^2}\\
f_{3,2}= \frac{2}{1-4x-x^2}.
\end{gather*}

Proposition \ref{p-m0} and its combinatorial interpretation give us a combinatorial interpretation to the identity
\begin{equation}
\label{e-trisec0} 
f_{3,0} 
=(1-3x-4x^2-4x^3-\cdots)^{-1}=\biggl(1-3x-4\,\frac{x^2}{1-x}\biggr)^{-1},
\end{equation}
which gives the formula
\begin{align*}
F_{3n+1} = \mathsum_{a\in C(n)} 3^{\#\{i\,:\, a_i=1\}} 4^{\#\{j\,:\, a_j\neq 1\}},
\end{align*}
for $n\ge 0$:
the free monoid of Fibonacci compositions of numbers divisible by 3  has three primes of weight one,  $\comp(1,1,1)$, 
$\comp(1,2)$,  and $\comp(2,1)$, and four primes of weight $n$ for each $n\ge 2$, each of which is of the form
\[\comp(a, b, c),\] where $a$ and $c$ are either $\comp(1,1)$ or $\comp(2)$, and $b$ is the composition
\[\comp(2,1,2,1,\cdots, 2,1,2)\] with $n-2$ parts 1 and $n-1$ parts 2. 

Next, we give a free monoid interpretation to the case $m=3$ of  \eqref{e-mm-0}:
\begin{equation}
1+xf_{3,2}=1+\frac{2x}{1-4x-x^2}=\biggl(1-\frac{2x}{1-2x-x^2}\biggr)^{-1}.
\label{e-32x}
\end{equation}

The left side of~\eqref{e-32x} counts Fibonacci compositions of numbers divisible by $3$ that start with a part $1$. In this free monoid there are two primes of weight one, $\comp(1,1,1)$ and $\comp(1,2)$. For  $n\ge 2$, a prime $p$ of weight $n$ ends with $\comp(2,1)$, $\comp(2,1,1)$ or $\comp(2,2)$, so it must be of one the following three kinds:
\begin{enumerate}[(1)]
\item $p$ is obtained from a prime of weight $n-1$ by attaching $\comp(2,1)$ in the end;
\item $p$ is obtained from a prime of weight $n-1$ ending with a part 1 by replacing the 1 with $\comp(2,1,1)$;
\item $p$ is obtained from a prime of weight $n-1$ ending with a part 1 by replacing  the 1 with $\comp(2,2)$.
\end{enumerate}
Let $a_n$ be the number of primes of weight $n$ and let $b_n$ be the number of primes of weight $n$ that end with a part 1. Then we have $a_n= a_{n-1} + 2 b_{n-1}$ and $b_n = a_{n-1} + b_{n-1}$.  
Thus 
\begin{align*}
a_n &= a_{n-1}+2b_{n-1} =a_{n-1}+2(a_{n-2}+b_{n-2})\\
  &=(a_{n-1}+a_{n-2}) + (a_{n-2}+2b_{n-2}) = (a_{n-1}+a_{n-2}) +a_{n-1}\\
  &=2a_{n-1}+a_{n-2}
\end{align*}
for $n\ge3$, and since $a_2 = a_1+2b_1 = 2 + 2 =4=2a_1+a_0$, where $a_0=0$, the recurrence holds for $n\ge 2$. Thus we find that the generating function for primes is 
\begin{equation}
\label{e-pell}
\frac{2x}{1-2x-x^2},
\end{equation}
and~\eqref{e-32x} follows.
The coefficients of \eqref{e-pell} are sequence \seqnum{A052542} or \seqnum{A163271}. They are twice the Pell numbers, sequence \seqnum{A000129}.

The case $m=3$ of Proposition \ref{p-m m-2} is
\begin{equation*}
1+xf_{3,1} = \left(1-x-\frac{4x^2}{1-3x}\right)^{-1}
= \left(1- x - \sum_{n=1}^\infty 4\cdot 3^{n-2}x^n\right)^{-1},
\end{equation*}
where the counting sequence for the primes is \seqnum{A003946}

We note also that
\begin{equation*}
1+xf_{3,0}=\left(1-\frac{x(1-x)}{1-3x-2x^2}\right)^{-1},
\end{equation*}
where the primes are counted by  \seqnum{A104934}, and
\begin{equation*}
1+xf_{4,3} = \left( 1-\frac{3x}{1-4x+x^2}\right)^{-1},
\end{equation*}
where the primes are counted by \seqnum{A005320}.

\end{document}